\newtheorem{theorem}{Theorem}
\newtheorem{lemma}{Lemma}%[section]
\newtheorem{definition}{Definition}\theoremstyle{definition}
\begin{document}
\author{Mytrofanov M. A., Ravsky A.V.}
\title{A note on approximation of\\ continuous functions on normed spaces}

%\shorttitle{Approximation of continuous functions on normed spaces}

\begin{abstract}
Let $X$ be a real separable normed space $X$ admitting a separating polynomial. We prove that each
continuous function from a subset $A$ of $X$ to a real Banach space can be uniformly approximated
by restrictions to $A$ of functions which are analytic on open subsets of $X$. Also we prove that
each continuous function to a complex Banach space from a complex separable normed space admitting
a separating $*$-polynomial can be uniformly approximated by $*$-analytic functions.

%{Митрофанов М.А, Равський О. В.}%
%{Про апроксисмацію неперервних функції в нормованих просторах}%
%{Нехай $X$ є дійсним  сепарабельним нормованим простором, що допускає відокремлювальний поліном.
%Показано, що непервні функції з підмножини $A$ в $X$ в дійсний банахів простір можуть бути рівномірно
%наближені аналітичними на відкритих підмножинах $X$.
%Також показано, що неперервні функції у комплексний банахів простір
%з комплексного сепарабельного нормованого простору, що допускає відокремлювальний $*$-поліном
%можуть бути рівномірно наближені $*$-аналітичними функціями.}
%{нормований простір, неперервна функція, аналітична функція, $*$-аналітична функція,
%рівномірна апроксимація, відокремлювальний поліном}
\end{abstract}
\keywords{normed
space, continuous function, analytic functions, $*$-analytic functions, uniformly approximated,
separating polynomial}

\subjclass{46G20, 46T20}
%\UDC{517.98}
% Universal Decimal Classification (УДК)
% It will be indicated by Editorial Team
% (for those authors, who don't understand what is it)

\address{Pidstryhach Institute for Applied Problems of Mechanics and Mathematics
National Academy of Sciences of Ukraine, Lviv, Ukraine}

\email{mishmit@rambler.ru (Mytrofanov M. A.)}
\email{alexander.ravsky@uni-wuerzburg.de (Ravsky A.V.)}

\maketitle

%%% ----------------------------------------------------------------------

The first known result on uniform approximation of
continuous functions was obtained by Weierstrass in 1885.
Namely, he showed that any continuous real-valued function on a compact
subset $K$ of a finitely dimensional real Euclidean space $X$ can be uniformly approximated by
restrictions on $K$ of polynomials on $X$. For a compact subset $K$ of a finitely dimensional
complex Euclidean space $X$ holds a counterpart of Stone-Weierstrass' theorem, according to
which any continuous complex-valued function on $K$ can be approximated by elements of any
algebra containing restrictions on $K$ of polynomials on $X$ and their conjugated functions.
%However, these results relate to subsets of
%finite-dimensional spaces.
A general direction of investigations
is to try to extend these results to topological linear spaces.
Most of the obtained results concern separable Banach spaces, although
in the paper~\cite{Mit_Rav} the authors obtained partial positive results for separable Fr\'echet spaces.
A negative result belong to Nemirovskii and Semenov, who in \cite{NS} built a
continuous real-valued function on the unit ball $K$ of the real space $\ell_2$
which cannot be uniformly approximated by restrictions onto $K$ of polynomials on $\ell_2$.
This result showed that in order to uniformly approximate continuous functions on Banach spaces we
need a bigger class of functions than polynomials.
The following fundamental result was obtained by Kurzweil~\cite{Ku}.

\begin{theorem}\label{theorem2}
Let $X$ be any separable real Banach space that admits a separating polynomial,
$G$ be any open subset of $X$, and $F$ be any continuous map
from $G$ to any real Banach space $Y$. Then
for any $\varepsilon > 0$ there exists an analytic map $H$ from $G$ to $Y$
such that $\|F(x)-H(x)\|<\varepsilon$ for all $x \in G$.
\end{theorem}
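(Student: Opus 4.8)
The plan is to build $H$ from an ``analytic approximate partition of unity'' manufactured out of the separating polynomial, and to arrange that the target space $Y$ enters only through constant vectors $F(a)\in Y$, so that no structure on $Y$ is needed. First comes a normalization: one extracts from the given separating polynomial a polynomial $P\colon X\to\mathbb R$ with $P(0)=0$, $P\ge 0$ on $X$ and $P(x)\ge 1$ for $\|x\|\ge 1$ (replace it by a positive combination of its dilates and take an even power). Since every continuous polynomial on a Banach space is an entire analytic map and $\exp$ is entire, for $a\in X$, $r>0$ and $N\in\mathbb N$ the ``bump''
\[
  \psi_{a,r,N}(x)=\exp\!\bigl(-N\,P\bigl((x-a)/r\bigr)\bigr)
\]
is analytic on $X$, takes values in $(0,1]$, equals $1$ at $a$, is $\le e^{-N}$ on $X\setminus B(a,r)$, and, because $\sup_{\|z\|\le t}P(z)\to 0$ as $t\to 0$, is $\ge 1-\delta$ on $B(a,r')$ once $r'/r$ is small enough (depending only on $N$ and $\delta$). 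Given a sequence of such bumps $\psi_1,\psi_2,\dots$ I would form the weights
\[
  \lambda_i=\psi_i\prod_{j<i}(1-\psi_j),\qquad\text{so that}\qquad \sum_{i=1}^m\lambda_i=1-\prod_{i=1}^m(1-\psi_i);
\]
each $\lambda_i$ is a finite product of analytic functions, hence analytic, $0\le\lambda_i\le\psi_i$, and $\sum_{i=1}^m\lambda_i\in[0,1]$, so no division is ever needed.

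I would first settle the \emph{bounded} case: let $g\colon G\to Y$ be continuous with $\|g\|\le C$ on $G$ and $g\equiv 0$ outside a ball $V$ with $\overline V\subseteq G$. Using continuity of $g$ and paracompactness of the metric space $G$, choose a countable locally finite family of balls $B(a_i,r_i)$ covering $V$, with $a_i\in V$, $\overline{B(a_i,2r_i)}\subseteq V$ and $\operatorname{diam}g\bigl(B(a_i,2r_i)\bigr)$ as small as we like, together with a still finer cover by balls $B(a_i,t_i)$, $t_i\ll r_i$. Put $\psi_i:=\psi_{a_i,r_i,N_i}$, form $\lambda_i$ as above, and set $H:=\sum_i\lambda_i g(a_i)$. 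If the exponents $N_i$ are chosen with $\sum_i e^{-N_i}$ small relative to $\varepsilon$ and $C$, and the ratios $t_i/r_i$ so small that $\psi_i\ge 1-\delta$ on $B(a_i,t_i)$ with $\delta$ small relative to $\varepsilon$ and $C$, then: on each bounded set the series for $H$ is, apart from the finitely many terms with $B(a_i,r_i)$ meeting that set, dominated by $C\sum_i e^{-N_i}$, hence uniformly convergent, so $H$ is analytic on $G$ (a locally uniform limit of analytic maps is analytic); for $x\notin V$ one has $\|x-a_i\|\ge r_i$ for all $i$, hence $\lambda_i(x)\le e^{-N_i}$ and $\|H(x)\|\le C\sum_i e^{-N_i}<\varepsilon$, matching $g(x)=0$; and for $x\in V$, choosing $i_0$ with $x\in B(a_{i_0},t_{i_0})$, the estimate
\[
  \|H(x)-g(x)\|\le \sum_{\|x-a_i\|<2r_i}\lambda_i(x)\,\|g(a_i)-g(x)\|
  +2C\!\!\sum_{\|x-a_i\|\ge 2r_i}\!\!\lambda_i(x)
  +\bigl(1-\sum_i\lambda_i(x)\bigr)\|g(x)\|
\]
bounds each of the three summands: the first by the oscillation control and $\sum_i\lambda_i(x)\le 1$, the second by $\lambda_i(x)\le e^{-N_i}$, the third by $1-\sum_i\lambda_i(x)=\prod_i(1-\psi_i(x))\le 1-\psi_{i_0}(x)\le\delta$. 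Thus $\|H-g\|$ can be made less than any prescribed positive number on $G$, with $H$ analytic.

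For a general $F\colon G\to Y$ I would reduce to the bounded case. As $F$ is continuous, hence locally bounded, and $G$ is paracompact, choose a countable locally finite open cover $\{V_n\}$ of $G$ by balls with $\overline{V_n}\subseteq G$ and $C_n:=\sup_{V_n}\|F\|<\infty$, and a continuous partition of unity $\{\phi_n\}$ with $\operatorname{supp}\phi_n\subseteq V_n$. Then $g_n:=\phi_n F$ is continuous, $\|g_n\|\le C_n$, vanishes outside $V_n$, and $\sum_n g_n=F$ (a locally finite sum). By the bounded case, for each $n$ take an analytic $H_n\colon G\to Y$ with $\sup_G\|H_n-g_n\|<\varepsilon\,2^{-n-1}$; then also $\|H_n\|<\varepsilon\,2^{-n-1}$ on $G\setminus V_n$ (since $g_n$ vanishes there). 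The series $H:=\sum_n H_n$ converges uniformly on each bounded subset of $G$ — only finitely many $V_n$ meet it, and on the rest $\|H_n\|<\varepsilon\,2^{-n-1}$ — so $H$ is analytic on $G$, and for every $x\in G$
\[
  \|F(x)-H(x)\|\le\sum_n\|g_n(x)-H_n(x)\|<\sum_n\varepsilon\,2^{-n-1}=\varepsilon/2<\varepsilon,
\]
which is what we wanted.

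The crux of the argument is the reduction in the third paragraph together with the estimate in the second. For an \emph{unbounded} $F$ no single weighted average works: the ``far'' bumps at a point $x$ contribute about $\|F(x)\|\sum_i e^{-N_i}$ to the error, and since $\|F(x)\|$ is unbounded on $G$ — possibly even on bounded subsets, with values blowing up along a non-precompact sequence — no fixed summable choice of $e^{-N_i}$ controls this; hence $F$ must first be cut into bounded pieces $\phi_n F$ by a locally finite partition of unity and the bumps used for each piece localized inside $V_n$. The other delicate point is the simultaneous calibration, in the bounded case, of the three free parameters — the radii $r_i$, the much smaller radii $t_i$, and the exponents $N_i$ — against $C$ and the target accuracy, carried out along with the (para)compactness construction of the covers; ensuring that the relevant families of balls stay locally finite is a routine but necessary technicality. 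The remaining ingredients are standard: polynomials and $\exp$ are analytic, analyticity is preserved under composition and under limits uniform on bounded sets, and separable metric spaces are paracompact and Lindelöf and carry continuous partitions of unity.
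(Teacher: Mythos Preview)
The paper does not prove this theorem: it is quoted as Kurzweil's 1954 result, attributed to \cite{Ku}, and then used as a black box in the proof of Theorem~\ref{thm:main}. So there is no in-paper argument to compare against; your sketch is essentially an outline of Kurzweil's own construction.

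The strategy is the right one, but two steps in your write-up are not correct as stated. First, in an infinite-dimensional Banach space a locally finite family of balls may have infinitely many members meeting a fixed bounded set (the disjoint balls $B(e_n,\tfrac12)$ in $\ell_2$ all meet the closed unit ball), so your repeated assertion that ``only finitely many $B(a_i,r_i)$'' --- and later ``only finitely many $V_n$'' --- meet a given bounded set is false. What local finiteness does give is, for each point, a \emph{neighborhood} meeting only finitely many members, and that is what you should use for locally uniform convergence. Second, and more seriously, the parenthetical principle ``a locally uniform limit of analytic maps is analytic'' is \emph{false} for real-analytic maps: by Weierstrass, polynomials approximate $x\mapsto|x|$ uniformly on $[-1,1]$. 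What makes Kurzweil's construction go through is that each bump $\exp\!\bigl(-N_iP((\cdot-a_i)/r_i)\bigr)$ extends to an entire map on the complexification $X_{\mathbb C}$, and one verifies that the decay $|\psi_i(z)|\le e^{-cN_i}$ persists on a complex thickening of $G$ whose width is independent of $i$; this forces a coordinated choice of the radii $r_i$ (locally bounded below) and the exponents $N_i$, and is where the genuine work lies. With these two repairs your outline becomes Kurzweil's proof, but as written the analyticity of $H$ is not established.
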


Separating polynomials were introduced in~\cite{Ku} and are considered in
reviews ~\cite{GRJJ_1} and~\cite{M_33}.
In order to define them and to obtain a counterpart of Kurzweil's Theorem for
a complex Banach space $X$, in paper~\cite{Mitr} were introduced notions, which we
adapt below for complex normed spaces $X$ and $Y$.

A map $B_{km}$ from $X^{k+m}$ to $Y$ is a map of type $(k,m)$ if
$B_{km}(x_1,...,x_k,x_{k+1},...,x_{k+m})$ is a nonzero map which is
$k$-linear with respect to $x_i$, if $1 \le i \le k$ and $m$-antilinear
with respect to $x_{k+j}$, if $1 \le j \le m$.

\begin{definition}\label{1d1}
A map $B_n:X^{n} \to Y$ is $*$-$n$-linear if
$$B_n(x_1,...,x_k,x_{k+1},...,x_{k+m}) = \sum_{k+m=n}
c_{km}B_{km}(x_1,...,x_k,x_{k+1},...,x_{k+m}),$$ where
for each $k$ and $m$ such that $k+m=n$,
$B_{km}$  is a map of type $(k,m)$ and $c_{km}$ is either $0$ or $1$,
and at least one of $c_{km}$ is non-zero.
\end{definition}

\begin{definition}\label{1d4}
A map $F_n:X \to Y$ is called a $n$-homogeneous
$*$-polynomial if there exists a $*$-$n$-linear map
$B_n:X^n \to Y$ such that $F_n(x)=B_n(x,...,x)$ for all $x \in
X$. Remark that $F_0$ is a constant map.
\end{definition}

\begin{definition}\label{1d5}
A map $F:X\to Y$ is a $*$-polynomial of degree $j$, if $ F=\sum\limits_{n=0}^j F_n,$ where
$F_n$ is an $n$-homogeneous continuous $*$-polynomial
for each $n$ and $F_j \ne 0.$
\end{definition}

\begin{definition}\label{1d9}
A map $H:X \to Y$ is $*$-analytic if every point $x \in X$
has a neighborhood $V$  such that $H(x)=\sum\limits_{n=0}^{\infty} F_n(x),$ where
for each $n$ we have that $F_n$ is an $n$-homogeneous continuous $*$-polynomial
and the series $\sum\limits_{n=0}^{\infty} F_n(x)$ converges in $V$
uniformly with respect to the norm of the space $Y.$
\end{definition}

\begin{definition}\label{1d8} Let $X$ be a complex (resp. real) normed space.
A $*$-polynomial (resp. po\-ly\-no\-mi\-al) $P:X \to \mathbb{C}$ (resp. to $\mathbb R$) is called a separating $*$-polynomial (resp. polynomial) if $P(0)=0$ and $\inf\limits_{\|x\|=1} P(x)>0$.
\end{definition}

Denote by $\tilde {\mathcal H}(X,Y)$ the normed space of $*$-analytic functions from $X$ to $Y$.

\begin{theorem}\label{1t3}
Let $X$ be any separable complex Banach space that admits a separating $*$-po\-lynomial,
$Y$ be any complex Banach space, and $F:X\to Y$ be any continuous map.
Then for any $\varepsilon>0$ there exists a map $H\in \tilde {\mathcal H}(X,Y)$ such that
$\|F(x)-H(x)\|<\varepsilon$ for all $x \in X$.
\end{theorem}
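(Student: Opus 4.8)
The plan is to reduce the statement to Kurzweil's Theorem~\ref{theorem2} by passing to the underlying real structures and back. Regard $X$ as a real Banach space $X_{\mathbb R}$ and $Y$ as a real Banach space $Y_{\mathbb R}$ (same sets, same norms; separability and completeness are not affected by forgetting the complex structure). First I would produce a separating polynomial for $X_{\mathbb R}$ from a separating $*$-polynomial $P\colon X\to\mathbb C$: each homogeneous component of $P$ is the diagonal of an $\mathbb R$-multilinear map (a $\mathbb C$-linear or $\mathbb C$-antilinear slot being in particular $\mathbb R$-linear), so $Q:=\operatorname{Re}P$ is a continuous real polynomial on $X_{\mathbb R}$ with $Q(0)=0$; moreover the condition $\inf_{\|x\|=1}P(x)>0$ means that $P$ is real-valued and bounded below by a positive constant on the unit sphere, where therefore $Q=\operatorname{Re}P$ coincides with $P$, so $\inf_{\|x\|=1}Q(x)>0$. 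Hence $X_{\mathbb R}$ satisfies the hypotheses of Theorem~\ref{theorem2}, and applying it with $G=X$ and with $F$ regarded as a continuous map $X_{\mathbb R}\to Y_{\mathbb R}$ produces an analytic map $H\colon X_{\mathbb R}\to Y_{\mathbb R}$ with $\|F(x)-H(x)\|<\varepsilon$ for all $x\in X$.

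It then remains to check that this $H$, viewed as a map $X\to Y$, belongs to $\tilde{\mathcal H}(X,Y)$. As $*$-analyticity is a local condition, this will follow from the key lemma: \emph{every continuous real $n$-homogeneous polynomial $Q\colon X_{\mathbb R}\to Y_{\mathbb R}$ is, as a map $X\to Y$, an $n$-homogeneous continuous $*$-polynomial.} Indeed, granting the lemma, fix $x_0\in X$; analyticity of $H$ provides a ball $B(x_0,r)$ and continuous real homogeneous polynomials $Q_n$ on $X_{\mathbb R}$ with $H(y)=\sum_{n\ge0}Q_n(y-x_0)$ uniformly for $\|y-x_0\|<r$, and by the lemma each $Q_n$ is an $n$-homogeneous continuous $*$-polynomial, so that same expansion exhibits $H$ as $*$-analytic at $x_0$.

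To prove the lemma I would write $Q$ as the diagonal of its (unique, continuous) associated symmetric $\mathbb R$-$n$-linear map $\hat Q\colon X_{\mathbb R}^{\,n}\to Y$. Every bounded $\mathbb R$-linear map $T\colon X_{\mathbb R}\to Y$ splits canonically into a $\mathbb C$-linear part $T^{+}(x)=\tfrac12\bigl(T(x)-iT(ix)\bigr)$ and a $\mathbb C$-antilinear part $T^{-}(x)=\tfrac12\bigl(T(x)+iT(ix)\bigr)$. Performing this splitting in each of the $n$ slots of $\hat Q$ and collecting terms gives $\hat Q=\sum_{S\subseteq\{1,\dots,n\}}\hat Q_S$, where $\hat Q_S$ is $\mathbb C$-linear in the slots indexed by $S$ and $\mathbb C$-antilinear in the remaining ones, hence, when nonzero, a bounded map of type $(|S|,n-|S|)$. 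Using the symmetry of $\hat Q$, all the $\hat Q_S$ with $|S|=k$ have the same restriction to the diagonal, so
\[
Q(x)=\hat Q(x,\dots,x)=\sum_{k+m=n}\binom{n}{k}\,\hat Q_{\{1,\dots,k\}}(x,\dots,x),
\]
and putting $c_{km}=1$ and $B_{km}=\binom{n}{k}\hat Q_{\{1,\dots,k\}}$ for those $(k,m)$ with $\hat Q_{\{1,\dots,k\}}\ne0$, and $c_{km}=0$ otherwise, realizes $Q$ as the diagonal of a $*$-$n$-linear map in the sense of Definitions~\ref{1d1}--\ref{1d4}. I expect this lemma to be the main obstacle: the slot-wise holomorphic/antiholomorphic splitting is routine, but one has to invoke the symmetry of $\hat Q$ carefully in order to recombine the $\binom{n}{k}$ pieces of a fixed bidegree into a single map of type $(k,m)$, and to treat the degenerate cases so that what remains literally fits Definition~\ref{1d1}.
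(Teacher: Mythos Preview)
The paper does not itself prove this theorem; it is quoted as a prior result from~\cite{Mitr}, and the present paper's goal is to extend it (and Kurzweil's theorem) to non-complete normed spaces. From the proof of Theorem~\ref{1t5} one can read off what the argument in~\cite{Mitr} is, and it is exactly yours: pass to the underlying real Banach space $\tilde X$, observe that a separating $*$-polynomial yields a separating real polynomial on $\tilde X$, apply Kurzweil's Theorem~\ref{theorem2} with $G=X$, and then use that the identity map is a real isomorphism between $\tilde{\mathcal H}(X,Y)$ and ${\mathcal H}(\tilde X,Y)$. That last identification is precisely~\cite[Lemma~2]{Mitr}, and your ``key lemma'' (every continuous real $n$-homogeneous polynomial on $\tilde X$ is an $n$-homogeneous $*$-polynomial on $X$) is its substantive direction; the slot-by-slot splitting $T=T^{+}+T^{-}$ into $\mathbb C$-linear and $\mathbb C$-antilinear parts, followed by the symmetry regrouping, is the standard way to establish it. Your proposal is correct and coincides with the original approach.
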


The aim of the present paper is to generalize Theorems~\ref{theorem2} and~\ref{1t3} to normed spaces.
For this we need the following technical

\begin{lemma}\label{lem:SepCompl} If a real normed space $X$ admits a separating polynomial $q$ then
its completion $\hat X$ admits a separating polynomial too.
\end{lemma}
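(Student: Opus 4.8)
The plan is to extend $q$ coefficientwise to the completion and then check that the extension is still separating. First I would write $q=\sum_{n=0}^{j}q_{n}$, where each $q_{n}$ is an $n$-homogeneous continuous polynomial on $X$; since $q(0)=0$ and the homogeneous terms of positive degree vanish at $0$, we get $q_{0}=0$. For each $n\ge 1$ choose a bounded $n$-linear map $B_{n}\colon X^{n}\to\mathbb{R}$ with $q_{n}(x)=B_{n}(x,\dots,x)$. Since $B_{n}$ is uniformly continuous on bounded subsets of $X^{n}$ and $\mathbb{R}$ is complete, $B_{n}$ extends uniquely to a bounded $n$-linear map $\hat B_{n}\colon\hat X^{n}\to\mathbb{R}$ with $\|\hat B_{n}\|=\|B_{n}\|$ (the standard one-variable-at-a-time density argument, which I would either quote or include in a line). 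Put $\hat q_{n}(x)=\hat B_{n}(x,\dots,x)$ and $\hat q=\sum_{n=1}^{j}\hat q_{n}\colon\hat X\to\mathbb{R}$. Then $\hat q$ is a continuous polynomial on $\hat X$, its restriction to $X$ equals $q$, and $\hat q(0)=0$.

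It remains to show that $\inf_{\|x\|=1,\ x\in\hat X}\hat q(x)>0$. Set $c:=\inf_{\|x\|=1,\ x\in X}q(x)>0$. The unit sphere $S_{X}$ of $X$ is dense in the unit sphere $S_{\hat X}$ of $\hat X$: given $y\in\hat X$ with $\|y\|=1$, pick $x_{k}\in X$ with $x_{k}\to y$; then $\|x_{k}\|\to 1$, so $x_{k}\ne 0$ for large $k$ and $x_{k}/\|x_{k}\|\in S_{X}$ with $x_{k}/\|x_{k}\|\to y$. Since $\hat q$ is continuous and $\hat q\ge c$ on the dense subset $S_{X}$ of $S_{\hat X}$, we conclude $\hat q\ge c$ on all of $S_{\hat X}$, i.e. $\inf_{\|x\|=1}\hat q(x)\ge c>0$. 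Hence $\hat q$ is a separating polynomial for $\hat X$.

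The argument is essentially routine. The only two points that need a little care are the unique norm-preserving extension of a bounded multilinear form to the completion, and the observation that density of $S_{X}$ in $S_{\hat X}$ together with continuity of $\hat q$ is enough to carry the strictly positive lower bound over from $X$ to $\hat X$. I do not anticipate a genuine obstacle; if anything, the mildly delicate bookkeeping is just making sure each homogeneous component of $q$ is continuous (so that it is represented by a bounded multilinear form) before extending.
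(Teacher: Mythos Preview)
Your proof is correct and follows essentially the same route as the paper: decompose $q$ into homogeneous components, extend the associated multilinear forms to the completion by continuity, and then use density of $S_{X}$ in $S_{\hat X}$ together with continuity of $\hat q$ to transfer the positive lower bound. Your treatment is in fact slightly more careful than the paper's (you correctly invoke uniform continuity on bounded subsets rather than a global Lipschitz property, and you spell out the sphere-density argument that the paper leaves as ``easy to show'').
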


\begin{proof}
 We have $q=\sum_{i\in I} q_i$ is a sum of homogeneous polynomials $q_i$ on the space $X$. For each $i\in I$ there exists a polylinear form $h_i:X^{n_i}\to\mathbb{R}$ such that $q_i(x)=h_i(x,\dots,x)$ for each $x\in X$.
Since $h_i$ is a Lipschitz function on $X^{n_i}$, by~\cite[Theorem 4.3.17]{En}, it admits a
continuous extension $\hat h_i$ on the space $\hat X^{n_i}$, which is polylinear by the
polylinearity of $h_i$.
 The map $\hat q_i:\hat X\to\mathbb R$ defined as
$\hat q_i(x)=\hat h_i(x,\dots,x)$ for each $x\in\hat X$ is an extension of the map $q_i$.
Then the map $\hat q=\sum_{i\in I} \hat q_i$ is a continuous polynomial extension of
the map $q$ onto the space $X$. It is easy to show that the unit sphere $S$ of the space $X$ is dense
in the unit sphere $\hat S$ of the space $\hat X$. Therefore $\inf_{x\in \hat S} \hat q(x)=
\inf_{x\in S} q(x)>0$, so $\hat q$ is a separating polynomial for the space $\hat X$.
\end{proof}

\begin{theorem}\label{thm:main} Let $X$ be a separable real normed space
that admits a separating polynomial, $Y$ be a real Banach space,
$A \subset X$, $f: A \to Y$ be a continuous function, and $\varepsilon >0$.
Then there are an open set $A_\varepsilon \supset A$ of $X$
and an analytic function  $f_\varepsilon:A_\varepsilon\to Y$ such that
$\| f (x)-f_\varepsilon (x)\| <\varepsilon $ for all $ x \in A $.
\end{theorem}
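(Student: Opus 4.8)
The plan is to reduce Theorem~\ref{thm:main} to Kurzweil's Theorem~\ref{theorem2} by passing to the completion. First I would invoke Lemma~\ref{lem:SepCompl} to obtain a separating polynomial on the completion $\hat X$; note that $\hat X$ is a separable real Banach space, so the hypotheses of Theorem~\ref{theorem2} are satisfied for $\hat X$ in place of $X$. The subtlety is that the given function $f$ is only defined and continuous on an arbitrary subset $A\subset X$, not on an open set, so Theorem~\ref{theorem2} does not apply directly; the bridge must be a continuous extension of $f$ to a neighborhood of $A$ inside $\hat X$.

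The key step is therefore a Tietze-type extension. Since $Y$ is a Banach space and $A$ (with the metric inherited from $X$, equivalently from $\hat X$) is a metric space, the Dugundji extension theorem provides a continuous function $g:\hat X\to Y$ with $g|_A=f$; in fact it suffices to extend continuously to $\overline{A}^{\,\hat X}$ and then, if one wants, further, but Dugundji gives a global extension on the metrizable space $\hat X$ at once. (If one prefers to avoid vector-valued Dugundji, one can instead extend to an open neighborhood using that $A$ is a subset of the metric space $\hat X$ and $Y$ is a Banach space — either way a continuous $Y$-valued extension defined on an open set $U\supset A$ of $\hat X$ is obtained.) Once we have a continuous $g$ on an open set $U\subset \hat X$ containing $A$, Theorem~\ref{theorem2} applied to $\hat X$, the open set $G=U$, and the map $g$ yields an analytic map $H:U\to Y$ with $\|g(x)-H(x)\|<\varepsilon$ for all $x\in U$, in particular $\|f(x)-H(x)\|<\varepsilon$ for all $x\in A$.

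Finally I would descend back to $X$. Set $A_\varepsilon := U\cap X$, which is an open subset of $X$ containing $A$, and let $f_\varepsilon := H|_{A_\varepsilon}$. It remains to check that the restriction of an analytic function on an open subset of $\hat X$ to the (open in the subspace topology) set $U\cap X$ is analytic in the sense used in the statement: this is immediate from the definition, since the local power-series representation $H(y)=\sum_{n} F_n(y-x_0)$ valid on a ball of $\hat X$ restricts to the same representation on the corresponding ball of $X$, and each homogeneous continuous polynomial on $\hat X$ restricts to one on $X$ with the same norm, so uniform convergence is preserved. Then $\|f(x)-f_\varepsilon(x)\|<\varepsilon$ for all $x\in A$, as required.

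The main obstacle I anticipate is the extension step: one needs a continuous $Y$-valued extension of $f$ from the arbitrary subset $A$ of a metric space to an open superset in $\hat X$, and $Y$ being a general Banach space (not necessarily with nice convexity beyond local convexity, which suffices) means one should cite the Dugundji extension theorem rather than the scalar Tietze theorem. Everything else — separability and completeness of $\hat X$, the stability of analyticity under restriction to a dense subspace, and the bookkeeping $A\subset A_\varepsilon\subset X$ — is routine.
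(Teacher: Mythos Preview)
Your overall architecture --- pass to the completion $\hat X$, invoke Lemma~\ref{lem:SepCompl}, apply Kurzweil's Theorem~\ref{theorem2} on an open subset of $\hat X$, then restrict back to $X$ --- matches the paper's. The gap is in the extension step. Dugundji's theorem requires the domain to be \emph{closed} in the ambient metric space, and for an arbitrary subset $A$ an exact continuous extension of $f$ to an open neighbourhood need not exist. A concrete obstruction already in the scalar case covered by the theorem: take $X=\hat X=\mathbb{R}$ (separating polynomial $x\mapsto x^{2}$), $Y=\mathbb{R}$, $A=\mathbb{Q}\cap[0,1]$, fix a countable dense set $\{\alpha_k\}$ of irrationals in $[0,1]$, and put $f(q)=\sum_{k:\,\alpha_k<q}2^{-k}$. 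Then $f$ is continuous on $A$, but its oscillation at each $\alpha_k$ is at least $2^{-k}$; since any open $U\supset A$ meets $\{\alpha_k\}$ (because $[0,1]\setminus U$ is closed and misses the rationals, hence is nowhere dense, while $\{\alpha_k\}$ is dense), no continuous extension of $f$ to any open $U\supset A$ exists. Your suggested detour through $\overline{A}^{\,\hat X}$ fails for the same reason --- already $\sin(1/x)$ on $(0,1]$ does not extend to the closure.

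The paper sidesteps this by not seeking an exact extension. It covers $A$ by balls $O(x)\subset\hat X$ on which $f$ oscillates by less than $\varepsilon/3$, takes a locally finite partition of unity $\{\varphi_s\}$ on the open set $\hat A_\varepsilon=\bigcup_{x\in A}O(x)$ subordinate to this cover, and defines $f'_\varepsilon=\sum_s f(x_s)\varphi_s$ for sample points $x_s\in\operatorname{supp}\varphi_s\cap A$. This $f'_\varepsilon$ is continuous on $\hat A_\varepsilon$ and satisfies $\|f-f'_\varepsilon\|<2\varepsilon/3$ on $A$ (an \emph{approximate} extension, not an equality), after which Theorem~\ref{theorem2} is applied with a further $\varepsilon/3$. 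In effect the partition-of-unity step is a Dugundji-type construction that trades exactness for the weaker approximate conclusion actually needed; replacing your exact-extension claim by this approximate one repairs the argument.
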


\begin{proof}
Let $\hat X$ be a completion of $X$.
We build a cover of the set $A$ by open in $\hat X$ sets as follows.
For each point $x \in A$ pick its neighborhood $O(x)$ open in $\hat X$
such that\linebreak
 $\|f(x^{\prime})-f(x)\|<\varepsilon/3$ for all $x^{\prime} \in O(x)\cap A.$

Put $\hat A_\varepsilon=\bigcup\limits_{x\in A}O(x).$ The topological space
$\hat A_\varepsilon$ is metrizable, and therefore paracompact,~\cite[5.1.3]{En}.
Therefore, by~\cite[5.1.9]{En} there is a locally finite partition $\{\varphi_s:s \in S \}$
of the unity, subordinated to the cover $\{ O(x) : x \in A \}.$

Now we construct an auxiliary function $f'_{\varepsilon} : \hat A_{\varepsilon} \rightarrow Y$.
First, for each index $s \in S$ we define a real number $a_s$ as follows.
If $\operatorname{supp} \varphi_s \cap A  \neq \oslash$,
then we pick an arbitrary point $x_s \in \operatorname{supp} \varphi_s \cap A$, and we put $a_s=f(x_s)$. Otherwise, we put $a_s=0$. Finally, put $f'_{\varepsilon} = \sum_{s\in S} a_s \varphi_s$.

Let $x \in A$. Put $S_x = \{ s \in S : x \in \operatorname{supp} \varphi_s \}.$
Then $\sum_{ s \in S_x} \varphi_s(x)=1.$ Let $s \in S_x$ be any index.
Thus there is an element $x_0 \in A$ such that $x \in \operatorname{supp} \varphi_s\subset O(x_0).$
Hence $x_s \in O(x_0)$ and $\|f(x) - a_s\|=\|f(x) - f(x_s)\|\leqslant \|f(x)- f(x_0)\| +
\|f(x_0) - f(x_s)\| < 2\varepsilon/3$.
Then
$$\left\|f(x)- f'_\varepsilon(x)\right\|=\left\|f(x) - \sum_{ s \in S} a_s \varphi_s(x)\right\|=
\left\|\sum_{ s \in S}f(x)\varphi_s(x)-\sum_{ s \in S} a_s \varphi_s(x)\right\|=$$
$$\left\|\sum_{ s \in S_x}f(x)\varphi_s(x)-\sum_{ s \in S_x} a_s \varphi_s(x)\right\| \leqslant
\sum_{ s \in S_x}\left\|f(x)\varphi_s(x)-a_s \varphi_s(x)\right\|=$$
$$\sum_{ s \in S_x}\left\|f(x)-a_s\right\|\varphi_s(x) < \sum_{ s \in S_x} ( 2 \varepsilon  /3 )\varphi_s(x)=2\varepsilon/3.$$

The function $f'_\varepsilon$ is continuous on $\hat A_\varepsilon$ as a sum of a family of continuous functions with a locally finite family of supports.

By Lemma~\ref{lem:SepCompl}, the space $\hat X$ admits a separating polynomial.
Therefore the space $X$ satisfies the conditions of
Theorem~\ref{theorem2}, so there exists a function $\hat f_\varepsilon $ analytic on
$\hat A_\varepsilon$ such that $\|\hat f_\varepsilon(x) - f'_\varepsilon(x)\|< \varepsilon/3$
for all $x \in \hat A_\varepsilon.$ Then for all $x \in A$,
$\|f(x) - \hat f_\varepsilon(x)\| \leqslant
\|f(x) - f'_\varepsilon(x)\| + \|f'_\varepsilon(x) - \hat f_\varepsilon(x)\|<
\varepsilon$. It remains to put $A_\varepsilon=\hat A_\varepsilon\cap X$ and let
$f_\varepsilon$ be the restriction of the map $\hat f_\varepsilon$ to the set $A_\varepsilon$.
\end{proof}

For a complex normed space $X$ by $\tilde X$ we denote it considered as a real normed space,
and by ${\mathcal H}(\tilde X,Y)$ the real normed space  of analytic functions from
the space $\tilde X$ to a Banach space $Y$.

\begin{theorem}\label{1t5}
Let $X$ be any separable complex normed space that admits a separating $*$-polynomial,
$Y$ be any complex Banach space, and $F:X\to Y$ be any continuous map.
Then for any $\varepsilon>0$ there exists a map $H\in \tilde {\mathcal H}(X,Y)$ such that
$\|F(x)-H(x)\|<\varepsilon$ for each $x \in X$.
\end{theorem}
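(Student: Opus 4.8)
The plan is to mimic the real case (Theorem~\ref{thm:main}) but now using the complex apparatus: pass to the completion, approximate $F$ by a locally finite partition-of-unity combination of constant values, and then invoke Theorem~\ref{1t3} on the completion. The first step is to establish a complex analogue of Lemma~\ref{lem:SepCompl}: if a complex normed space $X$ admits a separating $*$-polynomial $P$, then its completion $\hat X$ admits a separating $*$-polynomial. This should go through verbatim from the proof of Lemma~\ref{lem:SepCompl}, decomposing $P$ into $*$-$n$-homogeneous pieces $F_n(x)=B_n(x,\dots,x)$ with $B_n$ a $*$-$n$-linear map built from maps $B_{km}$ of type $(k,m)$; each $B_{km}$ is separately linear/antilinear, hence Lipschitz on bounded sets, so by~\cite[Theorem 4.3.17]{En} it extends continuously to $\hat X^{k+m}$, and the extension remains of type $(k,m)$ (linearity and antilinearity are closed conditions preserved under continuous extension, as in the real argument). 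Summing gives a $*$-polynomial extension $\hat P$ on $\hat X$, and since the unit sphere of $X$ is dense in that of $\hat X$, $\inf_{\|x\|=1,\,x\in\hat X}\operatorname{Re}\hat P(x)>0$ — one should be slightly careful here that a separating $*$-polynomial is $\mathbb C$-valued, so $\inf_{\|x\|=1}P(x)>0$ in Definition~\ref{1d8} is implicitly a statement about real parts or about $P$ taking positive real values on the sphere; in any case density of the sphere transfers the positivity of the relevant real-valued quantity.

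With that in hand, I would run the argument of Theorem~\ref{thm:main} with $A=X$. Let $\hat X$ be the completion of $\tilde X$ (equivalently of $X$); cover $X$ by sets $O(x)$ open in $\hat X$ with oscillation of $F$ less than $\varepsilon/3$, set $\hat A_\varepsilon=\bigcup_{x\in X}O(x)$, take a locally finite partition of unity $\{\varphi_s\}$ subordinate to this cover (the relevant space is metrizable, hence paracompact, by~\cite[5.1.3, 5.1.9]{En}), pick points $x_s$, and form $F'_\varepsilon=\sum_s F(x_s)\varphi_s$. Exactly as before one gets $\|F(x)-F'_\varepsilon(x)\|<2\varepsilon/3$ for all $x\in X$, and $F'_\varepsilon$ is continuous on $\hat A_\varepsilon$. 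Now apply Theorem~\ref{1t3} to the separable complex Banach space $\hat X$ (separable because $X$ is separable, Banach because it is a completion, admitting a separating $*$-polynomial by the lemma above) and the continuous map $F'_\varepsilon$ defined on $\hat A_\varepsilon$: this yields $\hat H\in\tilde{\mathcal H}(\hat X,Y)$ with $\|\hat H(x)-F'_\varepsilon(x)\|<\varepsilon/3$ on $\hat A_\varepsilon$, so $\|F(x)-\hat H(x)\|<\varepsilon$ for all $x\in X$. Restrict $\hat H$ to $X$ to obtain the desired $H\in\tilde{\mathcal H}(X,Y)$.

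Two technical points deserve attention. First, Theorem~\ref{1t3} as stated produces a map analytic on all of $\hat X$, whereas here $F'_\varepsilon$ is only defined on the open set $\hat A_\varepsilon$; but since $A=X$ we in fact have $\hat A_\varepsilon\supseteq X$ and, more to the point, the partition-of-unity construction can be carried out so that $\hat A_\varepsilon=\hat X$ if we simply enlarge the cover by adding the open set $\hat X\setminus X$ with any value (or, more cleanly, observe that one only needs the open-set version of Theorem~\ref{1t3}, which follows from the cited results in~\cite{Mitr} the same way Theorem~\ref{thm:main} used the open-set Theorem~\ref{theorem2}). Second, one must check that the restriction of a $*$-analytic function on $\hat X$ to the dense subspace $X$ is again $*$-analytic in the sense of Definition~\ref{1d9} — this is immediate, since the local power-series representation $\sum_n F_n$ restricts term by term and the $*$-$n$-homogeneous polynomials $F_n$ restrict to $*$-$n$-homogeneous polynomials on $X$, with uniform convergence preserved on the smaller neighborhood $V\cap X$. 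The main obstacle, such as it is, is the complex analogue of the completion lemma: verifying that the continuous extension of a map of type $(k,m)$ is still of type $(k,m)$ (in particular still nonzero) and that the infimum condition in Definition~\ref{1d8} survives passage to the completion; everything else is a routine transcription of the real proof.
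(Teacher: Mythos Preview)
Your approach differs from the paper's. The paper does not redo the completion--partition-of-unity argument in the complex setting; instead it reduces to the real result already proved. Concretely, the paper follows the proof of Theorem~\ref{1t3} from~\cite{Mitr}, whose scheme is: pass from the complex space $X$ to its realification $\tilde X$, observe that a separating $*$-polynomial on $X$ yields a separating (real) polynomial on $\tilde X$, apply the real approximation theorem, and finally use that the identity map $\tilde{\mathcal H}(X,Y)\to\mathcal H(\tilde X,Y)$ is an isomorphism of real normed spaces to recognise the resulting real-analytic map as $*$-analytic. The only change needed here is that the real approximation theorem invoked is Theorem~\ref{thm:main} (valid for normed spaces) rather than Kurzweil's Theorem~\ref{theorem2} (Banach spaces). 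This is shorter than your route because the completion and partition-of-unity work is already packaged inside Theorem~\ref{thm:main}.

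Your direct approach---prove a complex analogue of Lemma~\ref{lem:SepCompl}, pass to the completion $\hat X$, build $F'_\varepsilon$ on an open $\hat A_\varepsilon\supseteq X$, then invoke Theorem~\ref{1t3}---can be made to work, but contains one genuine slip and one gap. The slip: $\hat X\setminus X$ is \emph{not} open, since $X$ is dense in $\hat X$; so you cannot enlarge the cover that way to force $\hat A_\varepsilon=\hat X$. The gap: Theorem~\ref{1t3} as stated applies only to maps defined on all of $\hat X$, so you really do need either an open-set version of it (which, as you say, is obtainable from~\cite{Mitr} since that proof ultimately rests on Kurzweil's open-set theorem) or a continuous extension of $F'_\varepsilon$ from $\hat A_\varepsilon$ to $\hat X$ (e.g.\ via Dugundji's extension theorem). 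Either fix is routine, but neither is the ``add $\hat X\setminus X$'' move you proposed.
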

\begin{proof}
The proof is almost identical to the proof of Theorem 4 from~\cite{Mitr} with
the following modifications. Instead of the application of Kurzweil's Theorem we
apply Theorem~\ref{thm:main}. Instead of~\cite[Lemma 2]{Mitr} we use
the fact (whose proof is similar to that of~\cite[Lemma 2]{Mitr}) that
the identity map from a complex normed space $\tilde {\mathcal H}(X,Y)$
to the real normed space ${\mathcal H}(\tilde X,Y)$ is an isomorphism of real normed spaces.
\end{proof}

\end{document}